\theoremstyle{plain}
\newtheorem{theorem}{Theorem}[section]
\newtheorem{lemma}[theorem]{Lemma}
\newtheorem{prop}[theorem]{Proposition}
\theoremstyle{definition}
\newtheorem{rema}[theorem]{Remark}
\newtheorem{notation}[theorem]{Notation}
\newtheorem{defi}[theorem]{Definition}
\numberwithin{equation}{section}
\newenvironment{ sys_eq }{\ left \ lbrace \ begin { array }{ @ {} l@ {}}}{\ end { array }\ right .}
\newcommand{\cA}{\mathcal A}
\newcommand{\cH}{\mathcal H}
\newcommand{\cL}{\mathcal L}
\newcommand{\cR}{\mathcal R}
\newcommand{\al}{\alpha}
\newcommand{\be}{\beta}
\newcommand{\Ga}{\Gamma}
\newcommand{\si}{\sigma}
\newcommand{\Om}{\Omega}
\newcommand{\RR}{\mathbb R}
\newcommand{\rar}{\rightarrow}
\newcommand{\id}{\operatorname{id}}
\newcommand{\dive}{\operatorname{div}}
\newcommand{\p}{\parallel}
\def\Xint#1{\mathchoice
{\XXint\displaystyle\textstyle{#1}}%
{\XXint\textstyle\scriptstyle{#1}}%
{\XXint\scriptstyle\scriptscriptstyle{#1}}%
{\XXint\scriptscriptstyle\scriptscriptstyle{#1}}%
\!\int}
\def\XXint#1#2#3{{\setbox0=\hbox{$#1{#2#3}{\int}$}
\vcenter{\hbox{$#2#3$}}\kern-.5\wd0}}
\def\dashint{\Xint-}
\title[Linear dynamic boundary]{On a linear problem arising in dynamic boundaries}
\author[Disconzi]{Marcelo M. Disconzi}
\address{Department of Mathematics\\
Vanderbilt University, Nashville, TN 37240, USA}
\email{marcelo.disconzi@vanderbilt.edu}
\thanks{The author is partially supported by NSF grant 1305705.}
\begin{document}

\begin{abstract}
We study a linear problem that arises in the study of dynamic boundaries, 
in particular in free boundary problems in connection with fluid dynamics.
The equations are also very natural and of interest on their own.
\end{abstract}

\maketitle

\tableofcontents

\section{Introduction. \label{intro}}
Consider the problem
\begin{subnumcases}{\label{linear_problem}} 
 \Delta f = 0 & in $\Om$, \label{harmonic_ext_linear} \\
 \ddot{f} - \kappa 
\overline{\Delta} \partial_\nu f  =  G  & on $\partial \Om$, \label{eq_f_bry_estimates} \\
f(0, \cdot) = f_0, \, \dot{f}(0, \cdot ) = f_1 & on $\partial \Om$ \label{ic_f_bry_estimates},
\end{subnumcases}
where $\Om \subset \RR^n$ is a bounded domain whose boundary $\partial \Om$
 is an 
$n-1$-dimensional manifold embedded in $\RR^n$;
$\overline{\Delta}$ is the Laplacian on $\partial \Om$ and $\Delta$ the Laplacian in 
$\RR^n$; $\partial_\nu$ is the outer normal derivative on $\partial \Om$; 
$\kappa$ is a positive constant; $f: [0,T] \times \Om \rar \RR$ is the 
unknown, $T>0$; $G: [0,T] \times \partial \Om \rar \RR$, $f_0: \partial \Om \rar \RR$, 
and $f_1:\partial \Om \rar \RR$ are  given functions; 
and `` $\dot{}$ " means derivative with respect
to $t$, where we write $f = f(t,x)$, $t \in [0,T]$, $x \in \overline{\Om}$.
We shall elaborate an appropriate notion of weak solution to (\ref{linear_problem}),
 then establish the existence and uniqueness of weak solutions on any 
time interval $[0,T]$.

Let us discuss some motivations to study (\ref{linear_problem}). In this regard, 
it is perhaps worthwhile to start noticing that, from a PDE perspective, 
problem (\ref{linear_problem}) is 
 very natural. Without the normal derivative $\partial_\nu$ on the second
term on the left-hand side of (\ref{eq_f_bry_estimates}), the problem decouples: 
(\ref{eq_f_bry_estimates})-(\ref{ic_f_bry_estimates})
becomes a wave equation on the boundary, which can be solved by standard techniques, and
equation (\ref{harmonic_ext_linear}) says that this solution on $\partial \Om$ is extended to 
the interior via the unique harmonic extension of $\left. f \right|_{\partial \Om}$.
A similar procedure is no longer possible when the term $\partial_\nu$
is present, as in (\ref{linear_problem}). The introduction of the normal derivative
can be viewed as
one of the simplest ways of  modifying the wave equation on the boundary as to make 
it dependent on the interior values of $f$. 

A more direct motivation to investigate (\ref{linear_problem}) is
that it arises at the linearized level in the study of the (incompressible) 
free boundary Euler equations, as we now explain.

Consider the motion of an inviscid incompressible fluid 
within a bounded region of space, and suppose further that the
boundary of the region confining the fluid is not rigid, being
allowed to move according to the pressure exerted by the fluid
(hence
the name ``free bounary").
This is the situation, for example, in a liquid drop, or in 
Newtonian self-gravitating fluid bodies, such as
stars \cite{LindNor, Mak, Nis, White} (the analogue problem for viscous fluids
was first and extensively studied by Solonnikov \cite{MogSol, Sol1, Sol2, Sol3, Sol3,
Sol5, Sol6, Sol7}, with some more recent
advances found in 
\cite{KPW, PS, Se1, Se2, Se3} and references therein).
In such situations, the domain containing the fluid  changes over time. 
One thus
writes $\Omega(t)$, and $\Omega(t)$ becomes one of the unknowns of the problem.

The equations of motion describing the situation of the previous paragraph are
the well-known free boundary Euler equations:
\begin{gather}
\begin{cases}
\frac{\partial u}{\partial t} + \nabla_u u = - \nabla p & \text{ in } \Om(t), \\
\dive (u) = 0 & \text{ in } \Om(t),  \\
p = \kappa \cA & \text{ on  } \partial \Om(t), \\
\langle u, \nu \rangle = v & \text{ on  } \partial \Om(t), \\
u(0) = u_0,
\end{cases}
\label{free_Euler}
\end{gather}
where $u$ is the fluid velocity, $p$ is the fluid pressure,
 $\cA$ is the mean curvature
of $\partial \Om(t)$, $\nu$ is the unit outer normal to $\partial \Om(t)$, 
$v$ is the velocity of the moving boundary $\partial \Om(t)$,  and $\kappa$ a 
non-negative constant known as coefficient
of surface tension. We refer the reader to the literature (e.g. \cite{CS, Lin, White})
for a detailed discussion of these equations. It is important to point out that,
despite its importance and the great deal of work dedicated 
to (\ref{free_Euler}) 
\cite{Amborse, AmbroseMasmoudi, ChLin, Craig, E0, Lannes, Lin2, Nalimov, Sch, 
ShatahZeng, Wu, Yosihara}, only 
recently the problem has been shown to be well-posed \cite{CS, CSB, Lin}
(other recent results, including the study of the compressible free boundary
Euler equations, are \cite{CS2, CS3, CS4, CSH, CSL}).

A very natural question is that of the behavior of solutions to (\ref{free_Euler})
in the limit $\kappa \rar \infty$. Physically, large values of $\kappa$ correspond
to domains  with longer relaxation times or, more colloquially, to stiffer domains.
Therefore, one would expect that solutions (\ref{free_Euler}) with large $\kappa$ 
should be near solutions of the standard Euler equations in the \emph{fixed} 
domain $\Om \equiv \Om(0)$. 

The study of the limit $\kappa \rar \infty$, along with a proof of the corresponding convergence,
 was carried out by the author
and David G. Ebin in \cite{DE2d} in the case of two-spatial dimensions (the
reader is also referred to \cite{DE2d} for a more detailed discussion of
the intuition behind this convergence).
The core of the analysis
consists in studying the problem from the point of view of 
Lagrangian coordinates, in which case all quantities can be written as time-dependent functions
on the fixed domain $\Om$ ($=\Om(0)$). The flow of the vector 
field $u$, $\eta(t, \cdot): \Om \rar \RR^n$,
is decomposed in a part fixing the boundary and a boundary motion. Such 
decomposition takes the form
\begin{gather}
\eta = (\id + \nabla f) \circ \beta,
\label{decomp}
\end{gather}
where $\beta$ is a diffeomorphism of the domain $\Om$ (so in particular $\beta(\partial \Om) =
\partial \Om$), $f: \Om \rar \RR$, and $\id$ is 
the identity diffeomorphism. The term $\nabla f$ controls the motion of the boundary, and 
using (\ref{free_Euler}), it is possible to derive an equation for $f$. 
\emph{At the linearized level and to highest order,
this equation is} (\ref{linear_problem}); the third order operator
$\overline{\Delta} \partial_\nu$ stems from the mean curvature of the 
moving boundary. See \cite{DE2d} for details.

In \cite{DE2d}, we were interested in studying the limit $\kappa \rar \infty$, 
and therefore we relied on the aforementioned 
existence results for (\ref{free_Euler}) (particularly, \cite{CS}). Therefore,
the existence of solutions for the linearized problem, namely,  (\ref{linear_problem}), has not been addressed
in  \cite{DE2d}.
While it will be shown in a future work that the decomposition (\ref{decomp}) can be employed
to derive existence of solutions to (\ref{free_Euler}) \cite{DE3d}, such an analysis
is based on the calculus of pseudo-differential operators and techniques similar to 
\cite{K}. Hence, the simpler, more traditional
 methods that we shall present here do not appear
elsewhere. Furthermore, the results in \cite{DE3d} do not cover the case of 
\emph{weak} solutions to
(\ref{linear_problem}), which is the main point of this paper (see definition \ref{defi_weak}). 

We point out that the singular limit $\kappa \rar \infty$ investigated in 
\cite{DE2d,DE3d} fits into the larger picture of properties of solutions viewed
as curves on infinite dimensional manifolds of mappings, which has been extensively
studied in the context of the Euler 
equations. See the references \cite{BB, E_manifold, E0, E1, E2, E3, E4, EbinSymp, ED, EM, MEF},
and the discussion in the introduction of \cite{DE2d}. While here we shall not study
the dependence on the parameter $\kappa$,  it is instructive to keep the above ideas in mind.
In this regard, compare (\ref{linear_problem}) with the toy-model presented in \cite{E2}. 

Naturally, dynamic boundary value problems have a long history, 
leading to  variants of (\ref{linear_problem}). Adding to the 
aforementioned works, whose focus is mainly on equations of hyperbolic type,
the reader can consult, for instance, \cite{Escher, Hintermann}
and references therein, for a point of view that stresses parabolic equations.
Equations involving two time derivatives and a
third order operator also have been studied before (see, for instance,
\cite{Glenn}, and references therein, and see also the related
\cite{TG}). In particular, due to the elliptic operator $\overline{\Delta}$,
the boundary equation
(\ref{eq_f_bry_estimates}) is reminiscent of the so-called Wentzell boundary
conditions, which have been widely studied by 
A. Favini, G. Goldstein, J. Goldstein, and S.  Romanelli
 (a sample of such works is \cite{FGGR1, FGGR2, FGGR3, FGGR4, FGGR5}).

In order to state our results, some notation and definitions are needed.
Fix some $ T > 0$. Define
\begin{gather}
X^3_T(\partial \Om) = C^0([0,T], H^3(\partial \Om) ) 
\cap C^1([0,T], H^{\frac{3}{2}}(\partial \Om) )
\cap C^2([0,T], H^{0}(\partial \Om) ),
\nonumber
\end{gather}
and 
\begin{gather}
X^\frac{3}{2}_T(\partial \Om) = C^0([0,T], H^\frac{3}{2}(\partial \Om) ) 
\cap C^1([0,T], H^{0}(\partial \Om) ),
\nonumber
\end{gather}
with norms
\begin{align}
\p f \p_{X^3_T(\partial \Om)} & = \sup_{t \in [0,T]} \p f \p_{3, \partial}
+ \sup_{t \in [0,T]}  \p \dot{f} \p_{\frac{3}{2}, \partial}
+ \sup_{t \in [0,T]} \p \ddot{f} \p_{0, \partial},
\nonumber
\end{align}
and
\begin{align}
\p f \p_{X^\frac{3}{2}_T(\partial \Om)} & = \sup_{t \in [0,T]} \p f \p_{\frac{3}{2}, \partial}
+ \sup_{t \in [0,T]}  \p \dot{f} \p_{0, \partial}.
\nonumber
\end{align}
Above, $H^s(\partial \Om)$ is the 
Sobolev space whose norm
is denoted by $\p \cdot \p_{s,\partial}$. 
Notice that 
\begin{gather}
X_T^3(\partial \Om) \subset X_T^{\frac{3}{2}}(\partial \Om).
\nonumber
\end{gather}
The intersections forming $X_T^s(\partial \Om)$ are of 
Sobolev spaces that differ by $\frac{3}{2}$ derivatives. This is because equation 
(\ref{eq_f_bry_estimates}) is second order in time and third order in space, thus each 
time derivative corresponds to $\frac{3}{2}$ spatial derivatives.
The spaces
$X_T^s(\Om)$ are similarly defined, and the norm in $H^s(\Om)$ is denoted $\p \cdot \p_s$.

As it is implied in the above definitions, we are working with Sobolev spaces 
defined with $s \in \RR$. In the case $s \geq 0$, it is useful to have the following
explicit form. Put $s = m + \si$, where $m$ is an integer and 
$0 < \si < 1$. Then
\begin{gather}
\p u \p_s^2 = \p u \p^2_m + \left[ D^m u \right]_\si^2,
\label{fractional_norm}
\end{gather}
where $\left[ \cdot \right]_\si$ is the semi-norm
\begin{gather}
\left[  u \right]_\si^2 = \int_{\Om \times\Om}
\frac{ |u(x) - u(y) |^2 }{ |x-y|^{n+2\si}} \, dx dy, 
\nonumber
\end{gather}
with  $n$ as the dimension of $\Om$, and $ \p \cdot \p^2_m$ as  the standard Sobolev
norm defined for integer $m$ \cite{Hitch}. As usual, $H^0$ is simply the $L^2$ space.

As (\ref{linear_problem}) has not appeared in 
the literature before, our main 
interest is to define a natural notion of weak solutions to problem (\ref{linear_problem}), and
then show that these solutions exist. With this in mind, our treatment 
will focus on the simple situation where $\Om$ is the unit ball, 
and we restrict ourselves to the case $n=3$. 
In this situation, 
problem (\ref{linear_problem}) simplifies considerably, although many 
of the arguments below can be extended to a more general setting. 
Furthermore, this covers one of
the main cases of interest, namely, that motivated by the linearization of 
(\ref{free_Euler}) as discussed above and studied in \cite{DE2d, DE3d}.
We now proceed to state our results.

Denote $L^2(T) = L^2 ( [0,T] \times \partial \Om )$. Define a map
\begin{align}
\begin{split}
& \cL: X^3_T(\partial \Om) \rar L^2(T) \times H^\frac{3}{2}(\partial \Om) \times H^0(\partial \Om)
\equiv \cH  \\
& \cL(f) = (\ddot{f} - \kappa \overline{\Delta} \partial_\nu f, f(0), \dot{f}(0)),
\end{split}
\nonumber
\end{align}
where $\partial_\nu f$ is computed using
the harmonic extension of $f$ to $\Om$,
and we write $f(0) = f(0,\cdot)$, $\dot{f}(0) = \dot{f}(0,\cdot)$. Let $\cR \subset \cH$
be the image of $\cL$. We shall prove the following.

\begin{prop}
Let $\Om \subset \RR^3$ be the ball of radius one centered at the origin, 
and fix some $T>0$.
Let $\cL$, $\cR$, and $\cH$ be as above. Then:\\

\noindent (i) $\cL$ is injective, and since 
$X^3_T(\partial \Om) \subset X^\frac{3}{2}_T(\partial \Om)$, 
$\cL^{-1}$ defines a map
\begin{gather}
\cL^{-1}: \cR \rar X^\frac{3}{2}_T(\partial \Om).
\nonumber
\end{gather}
The map $\cL^{-1}$ is continuous as a map from $\cR$ to $X^\frac{3}{2}_T(\partial \Om)$. \\

\noindent (ii) The closure of $\cR$ in $\cH$, denoted $\overline{\cR}$, 
is the whole of $\cH$, i.e., 
$\overline{\cR} = \cH$, and $\cL^{-1}$ extends to a continuous
linear map, $\overline{\cL^{-1}}$, from $\cH$ to $X^\frac{3}{2}_T(\partial \Om)$. \\

\noindent (iii) The image of $\overline{\cL^{-1}}$ is
\begin{gather}
\overline{X^3_T(\partial \Om)\,}{}^{X^\frac{3}{2}_T(\partial \Om)},
\nonumber
\end{gather}
i.e., the closure of $X^3_T(\partial \Om) \subset X^\frac{3}{2}_T(\partial \Om)$
in the $X^\frac{3}{2}_T(\partial \Om)$ topology.
\label{prop_extension}
\end{prop}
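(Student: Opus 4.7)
The strategy is to exploit the spectral structure of the problem. Expanding the harmonic extension of $f|_{\partial \Om}$ in spherical harmonics as $\sum_{\ell,m} a_{\ell m}(t)\, r^\ell\, Y_{\ell m}$, one has $\partial_\nu f|_{\partial\Om} = \sum_{\ell,m} \ell\, a_{\ell m}\, Y_{\ell m}$ and $\overline{\Delta}\, Y_{\ell m} = -\ell(\ell+1)\, Y_{\ell m}$, so the boundary equation decouples into the scalar harmonic-oscillator ODEs
\[
\ddot{a}_{\ell m}(t) + \omega_\ell^2\, a_{\ell m}(t) = g_{\ell m}(t), \qquad \omega_\ell^2 := \kappa\,\ell^2(\ell+1),
\]
driven by the Fourier coefficients $g_{\ell m}$ of $G$ with initial data $a_{\ell m}(0)=(f_0)_{\ell m}$, $\dot{a}_{\ell m}(0)=(f_1)_{\ell m}$. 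Because $\p u \p_{s,\partial}^2 \simeq \sum_{\ell,m}(1+\ell)^{2s}|u_{\ell m}|^2$ on $S^2$ and $\omega_\ell \simeq \ell^{3/2}$, each inverse factor of $\omega_\ell$ arising in the Duhamel formula buys precisely $\frac{3}{2}$ spatial derivatives, matching the gaps in the $X$-scale exactly.

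For part (i), uniqueness of each mode ODE yields injectivity of $\cL$, and the explicit Duhamel representation
\[
a_{\ell m}(t) = (f_0)_{\ell m}\cos(\omega_\ell t) + \frac{(f_1)_{\ell m}}{\omega_\ell}\sin(\omega_\ell t) + \frac{1}{\omega_\ell}\int_0^t \sin(\omega_\ell(t-s))\, g_{\ell m}(s)\, ds
\]
(with the degenerate $\ell=0$ mode handled by $a_{00}(t)=(f_0)_{00}+t(f_1)_{00}+\int_0^t(t-s)g_{00}(s)\,ds$) gives, after squaring, weighting by $(1+\ell)^3$ for $a_{\ell m}$ and by $1$ for $\dot{a}_{\ell m}$, applying Cauchy--Schwarz on the convolution integral, and summing in $(\ell,m)$, the continuity estimate
\[
\p f \p_{X^\frac{3}{2}_T(\partial \Om)} \lesssim (1+T)^{1/2}\bigl(\p G \p_{L^2(T)} + \p f_0 \p_{\frac{3}{2},\partial} + \p f_1 \p_{0,\partial}\bigr),
\]
which is exactly the continuity of $\cL^{-1}$. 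For part (ii), given $(G,f_0,f_1)\in\cH$, truncate the spherical-harmonic expansions to $\ell \le N$ and approximate each of the finitely many $g_{\ell m}(\cdot)\in L^2([0,T])$ by a $C^\infty([0,T])$ function; the resulting truncated-and-smoothed datum admits a classical $X^3_T$ solution (finitely many modes, each given by a $C^2$-in-time ODE), hence lies in $\cR$. Density of $\cR$ in $\cH$ together with the above estimate and completeness of $X^\frac{3}{2}_T(\partial \Om)$ produces $\overline{\cL^{-1}}$ via the bounded-linear-transformation theorem.

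For part (iii), the inclusion $\overline{\cL^{-1}}(\cH) \subseteq \overline{X^3_T(\partial \Om)\,}{}^{X^\frac{3}{2}_T(\partial \Om)}$ follows immediately from continuity and $\cL^{-1}(\cR)=X^3_T(\partial \Om)$ via
\[
\overline{\cL^{-1}}(\cH) = \overline{\cL^{-1}}(\overline{\cR}) \subseteq \overline{\,\overline{\cL^{-1}}(\cR)\,}^{X^\frac{3}{2}_T(\partial \Om)} = \overline{X^3_T(\partial \Om)\,}{}^{X^\frac{3}{2}_T(\partial \Om)}.
\]
The reverse inclusion is what I expect to be the main obstacle of the whole proof, because the estimate of (i) controls the target norm by the data norm and not vice versa: an arbitrary $X^\frac{3}{2}_T$-Cauchy sequence in $X^3_T$ need not have $\cH$-Cauchy image under $\cL$. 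My approach is to produce, for each $f$ in the closure, a distinguished approximating sequence $f^n \in X^3_T$ whose data $\cL(f^n)$ is itself Cauchy in $\cH$; concretely, by combining spherical-harmonic truncation at level $N=N(n)$ with time-mollification at scale $\varepsilon=\varepsilon(n)$ applied to $f$, and verifying mode-by-mode through the Duhamel formula that the associated sources $g_{\ell m}^n$ converge in $L^2([0,T])$ and the initial-data Fourier coefficients converge in the respective Sobolev norms. Once such a sequence is in hand, $\cL(f^n) \to y$ in $\cH$ and continuity of $\overline{\cL^{-1}}$ identifies $f=\overline{\cL^{-1}}(y)$, completing the identification.
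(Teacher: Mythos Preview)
Your spectral approach for (i) and (ii) is correct and genuinely different from the paper's: the paper derives the continuity bound from an energy identity for $E=\tfrac12\int_{\partial\Om}(\dot f^2-\kappa f\,\overline{\Delta}\partial_\nu f)$ and proves $\overline{\cR}=\cH$ by an orthogonal--complement argument (Green's identity plus backward uniqueness), whereas you read everything off the Duhamel formula for each mode. Your route is more explicit and makes the scaling $\omega_\ell\simeq\ell^{3/2}$ transparent; the paper's energy method is less tied to the specific geometry of the ball.

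Your instinct that the reverse inclusion in (iii) is the real obstacle is correct, but the construction you propose cannot succeed, because that inclusion is actually false. Take $f(t)\equiv f_0=\sum_{\ell\ge1}\ell^{-5/2}Y_{\ell 0}$, so that $f_0\in H^{3/2}(\partial\Om)\setminus H^3(\partial\Om)$ and $f\in X^{3/2}_T(\partial\Om)$. The truncations $f^N=\sum_{1\le\ell\le N}\ell^{-5/2}Y_{\ell0}$ are constant--in--time elements of $X^3_T(\partial\Om)$ with $f^N\to f$ in $X^{3/2}_T(\partial\Om)$, so $f$ lies in the closure. Suppose however $f=\overline{\cL^{-1}}(G,g_0,g_1)$. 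Continuity of the evaluation maps at $t=0$ forces $g_0=f_0$ and $g_1=0$, and your own mode--by--mode Duhamel formula (which passes to the limit defining $\overline{\cL^{-1}}$) then forces $G_\ell(t)\equiv \ell^{-5/2}\omega_\ell^2=\kappa\,\ell^{-1/2}(\ell+1)$, giving $\|G\|_{L^2(T)}^2\simeq T\sum_\ell\ell=\infty$. Hence $f$ is in the closure but not in the image of $\overline{\cL^{-1}}$. The paper's own proof of (iii), incidentally, establishes only the forward inclusion $\overline{\cL^{-1}}(\cH)\subseteq\overline{X^3_T(\partial\Om)}^{\,X^{3/2}_T(\partial\Om)}$ together with injectivity of $\overline{\cL^{-1}}$, and never addresses the reverse direction you are trying to supply.
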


We can now introduce the following.
\begin{defi}
Let $f_0 \in H^\frac{3}{2}(\partial \Om)$, $f_1 \in H^0(\partial \Om)$, and $G \in L^2(T)$ be given.
We say that 
\begin{gather}
f \in \overline{X^3_T(\partial \Om)\,}{}^{X^\frac{3}{2}_T(\partial \Om)} \bigcap
X^2_T( \Om)
\nonumber
\end{gather}
is a weak solution
of (\ref{linear_problem}), if $\left. f \right|_{\partial \Om} = \overline{\cL^{-1}}\left( (G, f_0, f_1 )\right)$,
where $\overline{\cL^{-1}}: \cH \rar \overline{X^3_T(\partial \Om)\,}{}^{X^\frac{3}{2}_T(\partial \Om)}$
is the map given by proposition \ref{prop_extension}, and 
$f$ satisfies (\ref{harmonic_ext_linear}) in $\Om$. 
\label{defi_weak}
\end{defi}

To understand why this is a suitable definition of weak solutions for problem (\ref{linear_problem}),
 one should think of the example 
of the wave equation. In that case, given initial data in $H^1(\RR^n) \times H^{0}(\RR^n)$ and an inhomogeneous
term in, say, $L^1([0,T], H^{0}(\RR^n))$, the weak solution $u$ is in 
$C^0([0,T],H^1(\RR^n)) \cap C^1([0,T], H^{0}(\RR^n))$. Thus, the weak solution
has one less spatial derivative than the order of the equation, with $\partial_t u$
one degree less differentiable in space than $u$ itself. Such a regularity is
a consequence of the energy estimate, in which an integration by parts is performed.
In our case, each time derivative corresponds to $\frac{3}{2}$ spatial ones, and
we heuristically think of integrating by parts half of the derivatives of the 
third order spatial term.

Proposition \ref{prop_extension} essentially contains the existence of weak solutions, 
but we state it separately for convenience.

\begin{theorem}
Let $\Om \subset \RR^3$ be the ball of radius one centered at the origin, 
and fix some $T>0$. Given $G \in L^2(T)$, $f_0 \in H^\frac{3}{2}(\partial \Om)$,
and $f_1 \in H^0(\partial \Om)$, there exists a unique weak solution to the 
problem (\ref{linear_problem}). 
\label{main_theorem}
\end{theorem}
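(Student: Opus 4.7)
\textbf{Proof plan for Theorem \ref{main_theorem}.} The plan is to obtain the theorem essentially as a corollary of Proposition \ref{prop_extension}, with only a small amount of elliptic regularity needed to promote the boundary data produced by $\overline{\cL^{-1}}$ into an interior solution lying in $X^2_T(\Om)$.

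First I would dispatch uniqueness, which is immediate from the definition. If $f$ and $\tilde f$ are two weak solutions associated to the same triple $(G,f_0,f_1)$, then by Definition \ref{defi_weak} both boundary traces equal the single element $\overline{\cL^{-1}}((G,f_0,f_1))$, so $f-\tilde f$ vanishes on $\partial\Om$ and is harmonic in $\Om$ at each time $t$. Uniqueness of the Dirichlet problem on the ball then forces $f=\tilde f$.

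For existence, given $(G,f_0,f_1)\in\cH$, I would set
\begin{gather*}
g := \overline{\cL^{-1}}\bigl((G,f_0,f_1)\bigr)\in \overline{X^3_T(\partial\Om)\,}{}^{X^{\frac{3}{2}}_T(\partial\Om)},
\end{gather*}
using Proposition \ref{prop_extension}(ii), and define $f(t,\cdot)$ to be the harmonic extension of $g(t,\cdot)$ to $\Om$ for each $t\in[0,T]$. Then \eqref{harmonic_ext_linear} holds by construction and $\left.f\right|_{\partial\Om}=g$ matches the requirement in Definition \ref{defi_weak}, so the only thing left to check is that $f\in X^2_T(\Om)$. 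Proposition \ref{prop_extension}(iii) provides a sequence $f_k\in X^3_T(\partial\Om)$ whose boundary traces converge to $g$ in $X^{\frac{3}{2}}_T(\partial\Om)$; replacing $f_k$ by its harmonic extension inside $\Om$ and using that the harmonic extension commutes with $\partial_t$ (ellipticity acts only in the spatial variables), the standard Dirichlet estimate $\p u\p_{s+\frac{1}{2}}\lesssim \p \left.u\right|_{\partial\Om}\p_{s,\partial}$ on the unit ball yields
\begin{align*}
\p f_k-f_\ell\p_2 &\lesssim \p g_k-g_\ell\p_{\frac{3}{2},\partial}, \\
\p \dot f_k-\dot f_\ell\p_{\frac{1}{2}} &\lesssim \p \dot g_k-\dot g_\ell\p_{0,\partial},
\end{align*}
and an analogous estimate for the second time derivative (interpreted in the appropriate negative-order space by duality). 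These bounds show that the harmonic extensions of the $f_k$ are Cauchy in $X^2_T(\Om)$, so their limit $f$ lies in $X^2_T(\Om)$, as required.

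The main potential obstacle is the verification that $f\in X^2_T(\Om)$, i.e.\ checking the correct spatial-time regularity of the Poisson extension of the limiting boundary data. This is not a deep issue: it reduces to the fact that the harmonic extension operator is continuous between the relevant Sobolev pairs and commutes with $\partial_t$, so the approximation argument based on the sequence produced by Proposition \ref{prop_extension}(iii) transfers convergence from the boundary to the interior. Once this is done, combining existence with the earlier uniqueness argument completes the proof.
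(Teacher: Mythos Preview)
Your proposal is correct and follows essentially the same route as the paper: the paper's proof is the one-line observation that existence and uniqueness follow immediately from Proposition~\ref{prop_extension} together with the fact that, by elliptic theory, the harmonic extension of an element of $X_T^{\frac{3}{2}}(\partial\Om)$ lies in $X_T^2(\Om)$. You have simply spelled out the details (uniqueness via the Dirichlet problem, and the approximation argument for the interior regularity); the parenthetical about the second time derivative is unnecessary since $X_T^2(\Om)$ only involves $C^0$ and $C^1$ regularity in time, but this does no harm.
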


It should be stressed that for sufficiently regular data, existence for
(\ref{linear_problem}) can probably be derived by other  means. 
The novelty
of theorem \ref{main_theorem} is centered around the notion  of weak
solutions and their existence. In this regard, it is important to stress
that a semi-group approach can also be employed to study (\ref{linear_problem}),
in which case one is led, via Stone's theorem, to investigate the existence
of mild-solutions to the problem \cite{ABHN, Yosida}. Such mild solutions are,
 in fact, closely
related to our notion of weak solution. We believe, however, that the energy-method
approach here employed is significantly simpler in the sense that it does not rely
on heavy functional-analytic techniques, and is also of independent  interest 
to the community more acquainted with such type of estimates.

\section{Energy estimates.}

In this section we carry out the necessary energy estimates for the proofs
 of proposition \ref{prop_extension}
and theorem \ref{main_theorem}. We start recalling some useful tools and fixing some of 
the notation.

\subsection{Auxiliary results.\label{auxiliary}}

Here, we collect some well known facts that will be used in
the paper. Their proofs can be found in many sources, e.g., 
\cite{Adams, BB, Hitch, E1, P}.

First, recall that restriction to the boundary gives rise to 
a bounded linear map,
\begin{gather}
 \p u \p_{s, \partial} \leq C \p u \p_{s + \frac{1}{2}},~~ s > 0,
\label{restriction}
\end{gather}
with $C =C(n,s,\Om)$.

The usual interpolation inequality will also be needed: if $s_1 < s_2 < s_3$, then
\begin{gather}
\p u \p_{s_2} \leq \p u \p_{s_1}^\frac{s_3 - s_2}{s_3-s_1} \p u \p_{s_3}^\frac{s_2-s_1}{s_3-s_1}.
\label{interpolation}
\end{gather}

We finally recall the standard Cauchy inequality with $\gamma$,
\begin{gather}
ab \leq \gamma a^2 + \frac{1}{4\gamma} b^2,
\label{Cauchy_epsilon}
\end{gather}
$\gamma > 0$ (this inequality is usually called Cauchy inequality with 
$\varepsilon$,
with the letter $\varepsilon$ used instead of $\gamma$. We shall reserve 
$\varepsilon$ for other purposes below, thus we use $\gamma$ in (\ref{Cauchy_epsilon}) to avoid confusion).

\subsection{Coordinates and notation.}

Here, we make some remarks about coordinates and notation.

Recalling that $\Om$ is the ball of radius one centered at the origin, we write 
\begin{gather}
\partial \Om_r = \partial B_r(0).
\nonumber
\end{gather}
Sometimes, we employ spherical coordinates $(r,\phi,\theta)$, so that 
\begin{gather}
\Delta = \partial_r^2 + \frac{2}{r} \partial_r + \frac{1}{r^2} \Delta_{S^2},
\label{Laplacian_spherical}
\end{gather}
where $\Delta_{S^2}$ is the Laplacian on the standard round sphere, given
 in these coordinates
by
\begin{gather}
\Delta_{S^2} =   \partial_\phi^2 + \frac{\cos \phi}{\sin \phi} \partial_\phi 
+ \frac{1}{\sin^2 \phi} \partial^2_\theta.
\nonumber
\end{gather}
In particular, 
the Laplacian on $\partial \Om_r$, which we denote $\overline{\Delta}$ for any $r$, is
 \begin{gather}
\overline{\Delta} = \frac{1}{r^2} \Delta_{S^2} .
\label{induced_Laplacian_spherical}
\end{gather}
We shall use $\overline{\Delta}$ as an operator on the whole of $\Om$. 
To be precise,  this is not defined
at zero, but the origin can be removed  without changing the value of the integrals
$\int_\Om$ containing $\overline{\Delta}$ that will appear below. 
In particular, $\overline{\Delta} f$ is defined
 on $\Om$ (but the origin).

We shall also make use of the following coordinate choice.
For $\varepsilon > 0$,  let
\begin{gather}
\Om_\varepsilon = \Om \backslash (B_\varepsilon(0) \cup C_\varepsilon ),
\nonumber
\end{gather}
where $C_\varepsilon$ is the cone given in spherical  coordinates by
$\{ \phi \geq \pi - \varepsilon \}$.
Choose Fermi  coordinates $\{ x^\mu \}_{\mu=1}^3$ at the north 
 pole of $\partial \Om$. These coordinates
cover $\Om_\varepsilon$, and the Euclidean metric takes the form
$g = (g_{\al \be})$, with 
 $g_{33} = 1$, $g_{i3} = 0$, $i=1,2$, and $g_{ij}$, $i,j=1,2$, 
 being the metric induced on 
 the 
level sets $\{ x^3 = \text{ constant} \}$, which in turn correspond 
to  $\partial \Om_r \cap \Om_\varepsilon$.
 Furthermore, $\partial_3$ is orthogonal to  $\partial \Om_r \cap \Om_\varepsilon$,
 and $\partial_3 = - \partial_r$. We illustrate the construction of these coordinates
 in figure \ref{figure_Om_e}, where we also depict further notation 
 that will be used below.
 
\begin{figure}[!ht]
\centering
 \rotatebox{0.0}{\includegraphics[scale=.45]{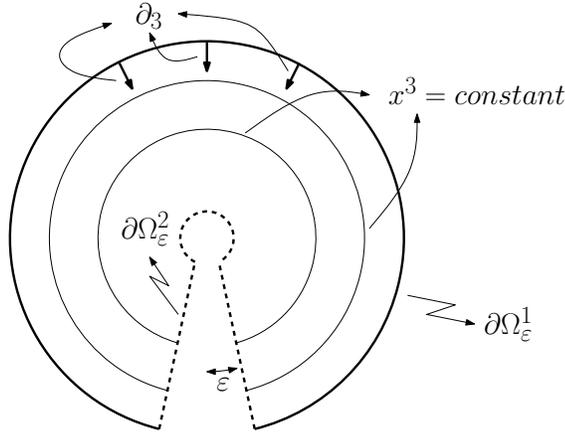}}
 \caption{Illustration of the set $\Om_\varepsilon$ and its 
 boundary $\partial \Om_\varepsilon = \partial \Om_\varepsilon^1 \cup 
 \partial \Om_\varepsilon^2$.}
 \label{figure_Om_e}
\end{figure}
 
For the rest of the paper, the following convention is adopted.
\begin{notation}
Greek indices run from $1$ to $3$ and Latin indices from $1$ to $2$. The letter
$C$ will be used to denote several different constants, as usual.
\end{notation}

In the above coordinates, equation  (\ref{harmonic_ext_linear}) then reads 
\begin{gather}
\nabla^\mu \nabla_\mu f = 0,
\nonumber
\end{gather}
where $\nabla$ is  covariant differentiation in the Euclidean metric, but written in 
this system of coordinates. Notice that all covariant derivatives will commute, as 
the metric is flat, and we shall use this in the calculations below.

\subsection{Basic energy inequality.\label{section_basic_energy}}

For the rest of this section, let $f \in X^3_T(\partial \Om)$ 
be a solution to (\ref{eq_f_bry_estimates})-(\ref{ic_f_bry_estimates}).
We also denote by $f$ its harmonic extension to $\Om$, i.e., 
$f$ satisfies (\ref{harmonic_ext_linear}) in $\Om$.

Define the energy
\begin{gather}
E = \frac{1}{2} \int_{\partial \Om} \left( \dot{f}^2
- \kappa f\overline{\Delta} \partial_\nu f \right).
\label{def_E}
\end{gather}

Differentiating
and integrating by parts,
\begin{gather}
\dot{E} = \int_{\partial \Om} \dot{f} \ddot{f} -
\frac{1}{2}\kappa \int_{\partial \Om} \dot{f}    \overline{\Delta} \partial_\nu f
-
 \frac{1}{2}\kappa \int_{\partial \Om}   \overline{\Delta} f \partial_\nu  \dot{f}.
\label{E_1}
\end{gather}
Let $\varphi = \varphi(r)$ be a sufficient regular function such that
$\varphi(1) = 1$, i.e.,  $\varphi \equiv 1$ on $\partial \Om$.
Apply   
Green's identity
\begin{gather}
\int_\Om \left( u \Delta v - v \Delta u \right) = \int_{\partial \Om}
\left( u \frac{\partial v }{\partial \nu} - v \frac{\partial u}{\partial \nu} \right)
\nonumber
\end{gather}
with $v = \varphi \overline{\Delta} f$ (which equals $\overline{\Delta} f$ on $\partial \Om$)
 and $u = \dot{f}$, to get
\begin{align}
\begin{split}
\int_{\partial \Om} \overline{\Delta } f \partial_\nu \dot{f}
=
\int_{\partial \Om} \varphi \overline{\Delta } f \partial_\nu \dot{f}
& = 
\int_{\partial \Om} \dot{f}  \overline{\Delta } \partial_\nu f +
\int_{\partial \Om} \dot{f} [\partial_\nu, \varphi \overline{\Delta} ] f
-
\int_\Om   \dot{f} \Delta 
(\varphi \overline{\Delta} f ) ,
\end{split}
\label{E_2}
\end{align}
where $[\cdot, \cdot]$ is the commutator, we have used that (\ref{harmonic_ext_linear}) implies $\Delta \dot{f} = 0$, and that $\varphi = 1$ on $\partial \Om$.
Using (\ref{E_2}) into (\ref{E_1}) gives
\begin{gather}
\dot{E} = \int_{\partial \Om} \dot{f} \left( \ddot{f} -
\kappa \overline{\Delta} \partial_\nu f \right )
+ r_1
\label{dot_E}
\end{gather}
where 
\begin{gather}
r_1 = -\frac{1}{2} \kappa 
\int_{\partial \Om} \dot{f} [\partial_\nu, \varphi \overline{\Delta} ] f
+ \frac{1}{2} \kappa
\int_\Om   \dot{f} \Delta 
(\varphi \overline{\Delta} f ) .
\nonumber
\end{gather}
To analyze $r_1$, pick $\varphi(r) = r^2$, which satisfies the previous assumptions on
$\varphi$. Then, on $\partial \Om$,
\begin{gather}
\partial_\nu (\varphi\overline{\Delta} f ) = \partial_r(r^2 \overline{\Delta} f )
= \partial_r \Delta_{S^2} f = \Delta_{S^2} \partial_r f =
\overline{\Delta} \partial_\nu f = \varphi \overline{\Delta} \partial_\nu f,
\nonumber
\end{gather} 
which implies $[\partial_\nu, \varphi \overline{\Delta} ] = 0$.
Using also (\ref{Laplacian_spherical}),
\begin{align}
\begin{split}
\Delta (\varphi \overline{\Delta} f) & = \Delta \Delta_{S^2} f 
 = \Delta_{S^2} \partial_r^2 f
+ \Delta_{S^2}\left( \frac{2}{r} \partial_r f \right)
+ \Delta_{S^2} \left (\frac{1}{r^2} \Delta_{S^2} f \right)  
= \Delta_{S^2} \Delta f = 0.
\end{split}
\nonumber
\end{align}
Thus $r_1 = 0$ and  (\ref{dot_E}) becomes
\begin{gather}
\dot{E} = \int_{\partial \Om} \dot{f} \left( \ddot{f} -
\kappa \overline{\Delta} \partial_\nu f \right ).
\label{dot_E_2}
\end{gather}
Invoking  (\ref{eq_f_bry_estimates}) and the Cauchy-Schwarz inequality, 
 (\ref{dot_E_2}) gives
\begin{gather}
\dot{E} 
\leq \frac{1}{2} \p \dot{f} \p_{0,\partial}^2 + \frac{1}{2} \p G \p_{0,\partial}^2,
\label{dot_E_CS}
\end{gather}
where we recall that $\p \cdot \p_{s,\partial}$ is the Sobolev
norm on the boundary.
\begin{lemma}
\begin{gather}
-\int_{\partial \Om} \overline{\Delta} f \partial_\nu f \geq 0.
\nonumber
\end{gather}
\label{positivity_lemma}
\end{lemma}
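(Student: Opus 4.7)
The plan is to exploit the spherical symmetry of $\Om = B_1(0)$ by expanding $f$ in spherical harmonics. Uniqueness of the harmonic extension together with the explicit solution of Dirichlet's problem on the ball gives
\begin{equation*}
f(r,\omega) \;=\; \sum_{\ell=0}^\infty \sum_{|m|\leq\ell} a_{\ell m}\, r^\ell\, Y_\ell^m(\omega), \qquad r\in[0,1],\ \omega\in S^2,
\end{equation*}
where $\{Y_\ell^m\}$ is an orthonormal basis of $L^2(S^2)$.

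First, I would compute the three boundary quantities at $r=1$: the trace is $f|_{\partial\Om}=\sum a_{\ell m}Y_\ell^m$; differentiating in $r$ and using $\partial_\nu = \partial_r$ on $\partial \Om$ yields $\partial_\nu f = \sum \ell\, a_{\ell m}Y_\ell^m$ (so the Dirichlet-to-Neumann map is multiplication by $\ell$ in this basis); and the eigenvalue identity $-\Delta_{S^2}Y_\ell^m = \ell(\ell+1)Y_\ell^m$, combined with \eqref{induced_Laplacian_spherical}, gives $-\overline{\Delta} f = \sum \ell(\ell+1)\, a_{\ell m}Y_\ell^m$ on $\partial\Om$. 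The crucial structural point is that $-\overline{\Delta}$ and the Dirichlet-to-Neumann map are simultaneously diagonalized in $\{Y_\ell^m\}$, both being nonnegative and commuting.

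Then Parseval's identity on $L^2(S^2)$ immediately yields
\begin{equation*}
-\int_{\partial\Om} \overline{\Delta} f \cdot \partial_\nu f \;=\; \sum_{\ell,m} \ell(\ell+1)\cdot \ell\, |a_{\ell m}|^2 \;=\; \sum_{\ell,m} \ell^2(\ell+1)\, |a_{\ell m}|^2 \;\ge\; 0,
\end{equation*}
which is the claim.

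The only thing that needs checking is that the formal manipulations are legitimate at the regularity in force elsewhere in the paper, namely $f|_{\partial\Om}\in H^3(\partial\Om)\subset H^{3/2}(\partial\Om)$; this is clear from the characterization $\p u\p_{s,\partial}^2 \simeq \sum (1+\ell)^{2s}|a_{\ell m}|^2$, under which the series above converges absolutely already at $s=3/2$. I do not anticipate any serious obstacle. The natural alternative would be a direct integration-by-parts route, rewriting $-\int_{\partial\Om}\overline{\Delta} f\,\partial_\nu f = \int_{\partial\Om}\overline{\nabla} f \cdot \overline{\nabla}(\partial_\nu f)$ and then using Green's identity together with $\Delta(\Delta_{S^2}f)=0$ (already established in the displayed calculation preceding the lemma); however, verifying the resulting sign ultimately still amounts to the spectral fact that the Dirichlet-to-Neumann map commutes with $\overline{\Delta}$ on the sphere, which is most transparent through the spherical-harmonic decomposition above.
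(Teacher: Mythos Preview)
Your argument is correct, but it is genuinely different from the paper's proof. The paper does not use spherical harmonics at all; instead it follows precisely the ``natural alternative'' you mention at the end: it integrates by parts to obtain $\int_{\partial\Om}\langle\nabla_\partial f,\nabla_\partial\partial_\nu f\rangle$, commutes $\nabla_\partial$ and $\partial_\nu$ (picking up a nonnegative curvature term $\frac{1}{r}|\nabla_\partial f|^2$), and then shows $\int_{\partial\Om}\langle\nabla_\partial f,\nabla_\nu\nabla_\partial f\rangle\geq 0$ by passing to Fermi coordinates on an excised domain $\Om_\varepsilon$, using $\Delta(\nabla_\sigma f)=0$ and Green's identity in the interior, and letting $\varepsilon\to 0^+$. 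The sign then comes from the pointwise inequality $|\nabla\nabla f|^2 - |\nabla\nabla_V f|^2\geq 0$ rather than from any spectral fact.

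Your route is considerably shorter and more transparent for the ball: it makes the Dirichlet-to-Neumann map explicit and reduces the lemma to the nonnegativity of $\ell^2(\ell+1)$. It also immediately yields the sharper two-sided equivalence $-\int_{\partial\Om}\overline{\Delta}f\,\partial_\nu f \simeq \sum \ell^3|a_{\ell m}|^2 \simeq \|f\|_{3/2,\partial}^2 - \|f\|_{0,\partial}^2$, which is the content of the paper's subsequent Lemma~\ref{lemma_3_2_norm}; the paper has to work harder for that via interpolation and trace inequalities. The price you pay is that your proof is tied to the exact geometry of the round sphere, whereas the paper's integration-by-parts argument is written in a way that, as the authors note, is intended to extend to more general domains where no explicit diagonalization of the Dirichlet-to-Neumann map is available.
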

\begin{proof}
Integrating by parts,
\begin{gather}
-\int_{\partial \Om} \overline{\Delta} f \partial_\nu f
= \int_{\partial \Om} \langle \nabla_\partial f, \nabla_\partial \partial_\nu f \rangle,
\label{first_by_parts_positivity}
\end{gather}
where $\nabla_\partial$ is the gradient on $\partial \Om$.
We need to commute $\nabla_\partial$ and 
$\partial_\nu$.
This is easily done in spherical coordinates, yielding
\begin{gather}
\left[ \nabla_\partial, \nabla_\nu \right] = \frac{1}{r} \nabla_\partial .
\nonumber
\end{gather}
(\ref{first_by_parts_positivity}) becomes
\begin{align}
\begin{split}
-\int_{\partial \Om} \overline{\Delta} f \partial_\nu f
& = 
\int_{\partial \Om} \frac{1}{r} |\nabla_\partial f|^2
+ 
\int_{\partial \Om}  \langle \nabla_\partial f, \nabla_\nu \nabla_\partial f \rangle .
\end{split}
\label{second_by_parts_positivity}
\end{align}
Choose Fermi coordinates as explained at the beginning
 of this section. Then $\nabla^\mu \nabla_\mu f = 0$
implies $\nabla^\mu \nabla_\mu \nabla_\si f = 0$,
 and $\nabla^\si f \nabla^\mu \nabla_\mu \nabla_\si f =0$.
Integrating over $\Om_\varepsilon$ and integrating by parts,
\begin{align}
\begin{split}
0 = \int_{\Om_\varepsilon} 
\nabla^\si f \nabla^\mu \nabla_\mu \nabla_\si f
& = - \int_{\Om_\varepsilon} \nabla^\mu \nabla^\si f \nabla_\mu \nabla_\si f 
+ \int_{\partial \Om_\varepsilon} \nabla^\si f \nabla_V \nabla_\si f ,
\end{split}
\nonumber
\end{align}
so
\begin{align}
\begin{split}
\int_{\Om_\varepsilon} \nabla^\mu \nabla^\si f \nabla_\mu \nabla_\si f  
& = \int_{\partial \Om_\varepsilon^1}  \nabla^\si f \nabla_V \nabla_\si f
+ \int_{\partial \Om_\varepsilon^2} \nabla^\si f \nabla_V \nabla_\si f,
\end{split}
\label{split_partial_1_2}
\end{align}
where $\partial \Om_\varepsilon^1 = 
\partial \Om \backslash (\partial \Om_\varepsilon \cap \partial \Om)$,
$\partial \Om_\varepsilon^2 = \partial \Om_\varepsilon \backslash \partial \Om_\varepsilon^1$ 
(see figure \ref{figure_Om_e}),
and $\nabla_V$ is covariant differentiation in the normal direction, with $V$ the unit
outer normal. On $\partial \Om_\varepsilon^1$, $\nabla_V = -\nabla_3$. Compute
\begin{align}
\begin{split}
\nabla_3 \nabla_\si f & = 
\partial_3 \partial_\si f - \Ga_{3\si}^j \partial_j f,
\end{split}
\nonumber
\end{align}
where we used that any Christoffel symbol with two indices $3$ vanishes in 
Fermi coordinates. Let $\overline{\nabla}$ be the covariant derivative on
$\partial \Om$. Then, since $\nabla_i f = \partial_i f = \overline{\nabla}_i f$,
\begin{align}
\begin{split}
\nabla_3 \nabla_i f & =  
\nabla_3 \overline{\nabla}_i f. 
\end{split}
\nonumber
\end{align}
Hence,
\begin{align}
\begin{split}
\int_{\partial \Om_\varepsilon^1}  \nabla^\si f \nabla_V \nabla_\si f
& = 
- \int_{\partial \Om_\varepsilon^1}  \overline{\nabla}^i f \nabla_3 \overline{\nabla}_i f 
- \int_{\partial \Om_\varepsilon^1}  \nabla^3 f \nabla_3 \nabla_3 f \\
\end{split}
\label{cov_diff_split_i_n}
\end{align}
Using (\ref{cov_diff_split_i_n}) into (\ref{split_partial_1_2}) gives
\begin{align}
\begin{split}
\int_{\Om_\varepsilon} \nabla^\mu \nabla^\si f \nabla_\mu \nabla_\si f  
& = 
- \int_{\partial \Om_\varepsilon^1}  \overline{\nabla}^i f \nabla_3 \overline{\nabla}_i f 
- \int_{\partial \Om_\varepsilon^1}  \nabla^3 f \nabla_3 \nabla_3 f
+ \int_{\partial \Om_\varepsilon^2} \nabla^\si f \nabla_V \nabla_\si f,
\end{split}
\label{intermediate_partial_1}
\end{align}
Taking $\nabla_3$ of $\nabla^\mu \nabla_\mu f =0$, commuting the
covariant derivatives, multiplying by $\nabla^3 f$ and 
integrating by parts,
\begin{align}
\begin{split}
0 = \int_{\Om_\varepsilon} 
\nabla^3 f \nabla^\mu \nabla_\mu \nabla_3 f
& = - \int_{\Om_\varepsilon} \nabla^\mu \nabla^3 f \nabla_\mu \nabla_3 f 
+ \int_{\partial \Om_\varepsilon} \nabla^3 f \nabla_V \nabla_3 f ,
\end{split}
\nonumber
\end{align}
so, since $\nabla_V = - \nabla_3$ on $\partial \Om_\varepsilon^1$,
\begin{align}
\begin{split}
 \int_{\partial \Om_\varepsilon^1}  \nabla^3 f \nabla_3 \nabla_3 f
& = 
 -\int_{\Om_\varepsilon} \nabla^\mu \nabla^3 f \nabla_\mu \nabla_3 f  
+ \int_{\partial \Om_\varepsilon^2} \nabla^3 f \nabla_V \nabla_3 f 
\end{split}
\nonumber
\end{align}
Using the above expression for 
$\int_{\partial \Om_\varepsilon^1}  \nabla^3 f \nabla_3 \nabla_3 f$ into
(\ref{intermediate_partial_1}),
\begin{align}
\begin{split}
\int_{\Om_\varepsilon} \nabla^\mu \nabla^\si f \nabla_\mu \nabla_\si f  
& = 
- \int_{\partial \Om_\varepsilon^1}  \overline{\nabla}^i f \nabla_3 \overline{\nabla}_i f
+\int_{\Om_\varepsilon} \nabla^\mu \nabla^3 f \nabla_\mu \nabla_3 f  
\\
& - \int_{\partial \Om_\varepsilon^2} \nabla^3 f \nabla_V \nabla_3 f 
+ \int_{\partial \Om_\varepsilon^2} \nabla^\si f \nabla_V \nabla_\si f,
\end{split}
\nonumber
\end{align}
so that
\begin{align}
\begin{split}
 \int_{\partial \Om_\varepsilon^1}  \overline{\nabla}^i f \nabla_V \overline{\nabla}_i f
 = 
- \int_{\partial \Om_\varepsilon^1}  \overline{\nabla}^i f \nabla_3 \overline{\nabla}_i f
& = 
\int_{\Om_\varepsilon} \nabla^\mu \nabla^\si f \nabla_\mu \nabla_\si f  
- \int_{\Om_\varepsilon} \nabla^\mu \nabla^3 f \nabla_\mu \nabla_3 f  
\\
& + \int_{\partial \Om_\varepsilon^2} \nabla^3 f \nabla_V \nabla_3 f 
- \int_{\partial \Om_\varepsilon^2} \nabla^\si f \nabla_V \nabla_\si f.
\end{split}
\label{estimate_bry_interior_epsilon}
\end{align}
The first two terms on the right hand side combine to give 
\begin{gather}
\int_{\Om_\varepsilon} \left( \nabla^\mu \nabla^\si f \nabla_\mu \nabla_\si f  
- \nabla^\mu \nabla^3 f \nabla_\mu \nabla_3 f  \right)
= \int_{\Om_\varepsilon} \left( |\nabla\nabla f|^2 - |\nabla \nabla_V  f |^2\right) \geq 0.
\nonumber
\end{gather}
For the integrals along $\partial \Om_\varepsilon^2$, write
$\partial \Om_\varepsilon^2 = \partial \Om_\varepsilon^{21} \cup \partial \Om_\varepsilon^{22}$,
where
\begin{gather}
\partial \Om_\varepsilon^{21} = \partial \Om_\varepsilon^{2} \cap \partial B_\varepsilon(0),
\nonumber
\end{gather}
and
\begin{gather}
\partial \Om_\varepsilon^{22} = \partial \Om_\varepsilon^{2} \backslash
\partial \Om_\varepsilon^{21}.
\nonumber
\end{gather}
Then
\begin{align}
\begin{split}
\left|
 \int_{\partial \Om_\varepsilon^{21}} \nabla^3 f \nabla_V \nabla_3 f 
- \int_{\partial \Om_\varepsilon^{21}} \nabla^\si f \nabla_V \nabla_\si f
\right| \leq & 
 \int_{\partial \Om_\varepsilon^{21}} \left| \nabla^3 f \nabla_V \nabla_3 f \right|
+ \int_{\partial \Om_\varepsilon^{21}} \left| \nabla^\si f \nabla_V \nabla_\si f \right|
\\
\leq & 
 \int_{\partial B_\varepsilon(0)} \left| \nabla^3 f \nabla_V \nabla_3 f \right|
+ \int_{\partial B_\varepsilon(0)} \left| \nabla^\si f \nabla_V \nabla_\si f \right| \\
\leq & 
C \int_{\partial B_\varepsilon(0)} \p f \p^2_{2,\partial B_\varepsilon(0)} \\
\leq & C \int_{\partial B_\varepsilon(0)} \p f \p^2_{\frac{5}{2}} \\
\leq & C \p f \p^2_{\frac{5}{2}} \varepsilon^2,
\end{split}
\label{error_ball}
\end{align} 
where $\p \cdot \p_{2,\partial B_\varepsilon(0)}$ is the Sobolev norm on $\partial B_\varepsilon(0)$,
and on the next-to-the-last step we used (\ref{restriction}).
Similarly,
\begin{align}
\begin{split}
\left|
 \int_{\partial \Om_\varepsilon^{22}} \nabla^3 f \nabla_V \nabla_3 f 
- \int_{\partial \Om_\varepsilon^{22}} \nabla^\si f \nabla_V \nabla_\si f
\right| \leq & 
C \p f \p^2_{\frac{5}{2}} \varepsilon.
\end{split}
\label{error_cone}
\end{align}
From (\ref{error_ball}) and (\ref{error_cone}) we see that the integrals over $\partial \Om_\varepsilon^2$
vanish in the limit $\varepsilon \rar 0^+$. Hence,
\begin{gather}
\int_{\partial \Om} \langle \nabla_\partial f, \nabla_\nu \nabla_\partial f \rangle 
= \lim_{\varepsilon \rar 0^+} \int_{\partial \Om_\varepsilon^1}  \overline{\nabla}^i f \nabla_V \overline{\nabla}_i f
  \geq 0,
\label{positivity_limit_epsilon}
\end{gather}
where we recall that $\nu$ is the outer unit normal to $\partial \Om$.
Combining (\ref{positivity_limit_epsilon})
 with (\ref{second_by_parts_positivity}) yields the result.
\end{proof}
As a consequence of lemma \ref{positivity_lemma}, we have
$E(t) \geq 0$, and
\begin{gather}
\p \dot{f} \p_{0, \partial}^2 \leq E.
\label{f_dot_partial_E}
\end{gather}
Thus, (\ref{dot_E_CS}) gives
\begin{gather}
\dot{E} \leq \frac{1}{2} \p G \p_{0,\partial}^2 + \frac{1}{2} E,
\nonumber
\end{gather}
or,
\begin{gather}
E(t)  \leq E(0) + \frac{1}{2} \int_0^t  \p G(\tau) \p_{0,\partial}^2 \, d\tau 
+ \frac{1}{2} \int_0^t  E(\tau) \, d\tau,
\nonumber
\end{gather}
which gives, after iteration, 
\begin{gather}
E(t) \leq \left(  E(0) + \frac{1}{2} \int_0^t  \p G(\tau) \p_{0,\partial}^2 \, d\tau 
\right) e^{\frac{1}{2} t}.
\label{simple_energy_estimate_linear}
\end{gather}
From
(\ref{harmonic_ext_linear}) and standard elliptic theory we have
\begin{gather}
\p f \p_s \leq C \p f \p_{s-\frac{1}{2},\partial}.
\label{elliptic_estimate_Laplace_eq}
\end{gather}
And invoking elliptic theory once more,  (\ref{elliptic_estimate_Laplace_eq}) then gives 
the following estimate:
\begin{align}
\begin{split}
\left| \int_{\partial \Om} f \overline{\Delta} \partial_\nu f \right |
& = \left| \int_{\partial \Om}  \overline{\Delta}f \partial_\nu f \right | \\
& \leq \p \overline{\Delta}f \p_{-\frac{1}{2},\partial} \p \partial_\nu f \p_{\frac{1}{2},\partial} 
\\
& \leq C\p f \p_{\frac{3}{2}, \partial } \p f \p_2 \\
& \leq C \p f \p_{\frac{3}{2}, \partial}^2.
\end{split}
\label{energy_less_3_2}
\end{align}
Thus, (\ref{energy_less_3_2}) and the definition of $E$, i.e. (\ref{def_E}), imply
\begin{gather}
E \leq C \p f \p_{\frac{3}{2}, \partial }^2 + C \p \dot{f} \p_{0,\partial}^2.
\label{energy_less_norm}
\end{gather}
Combining (\ref{energy_less_norm}) at time zero with (\ref{simple_energy_estimate_linear})
produces
\begin{gather}
E(t) \leq C \left(  
\p f(0) \p_{\frac{3}{2}, \partial }^2 + C \p \dot{f}(0) \p_{0,\partial}^2
+ \frac{1}{2} \int_0^t  \p G(\tau) \p_{0,\partial}^2 \, d\tau 
\right) e^{\frac{1}{2} t}.
\label{simple_energy_estimate_linear_2}
\end{gather}
Next, we show that
\begin{lemma}
\begin{gather}
-\int_{\partial \Om} f \overline{\Delta} \partial_\nu f =
 - \int_{\partial \Om}
\overline{\Delta} f \partial_\nu f \geq C \p f \p^2_{\frac{3}{2}, \partial } 
-C \p f \p_{0,\partial}^2.
\nonumber
\end{gather}
\label{lemma_3_2_norm}
\end{lemma}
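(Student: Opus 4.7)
The plan is to exploit the spherical symmetry of $\Om$ by expanding on the eigenbasis of $\overline{\Delta}$. The first equality is immediate: since $\partial\Om = S^2$ is a closed manifold, integration by parts on $\partial \Om$ transfers $\overline{\Delta}$ from $\partial_\nu f$ onto $f$ with no boundary contribution, giving
$$-\int_{\partial\Om} f\, \overline{\Delta}\partial_\nu f = -\int_{\partial\Om} \overline{\Delta} f \,\partial_\nu f.$$

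For the lower bound I would argue via spherical harmonics. Write $f\big|_{\partial\Om} = \sum_{\ell\geq 0,\, |m|\leq \ell} f_{\ell m} Y_\ell^m$. Since $f$ is the harmonic extension to the unit ball, standard separation of variables gives $f(r,\omega) = \sum_{\ell,m} r^\ell f_{\ell m} Y_\ell^m(\omega)$, so $\partial_\nu f\big|_{\partial\Om} = \sum_{\ell,m} \ell\, f_{\ell m} Y_\ell^m$. Combining with $\overline{\Delta} Y_\ell^m = -\ell(\ell+1) Y_\ell^m$ and orthonormality of $\{Y_\ell^m\}$ on $S^2$, Plancherel produces the exact identity
$$-\int_{\partial\Om} f\, \overline{\Delta}\partial_\nu f = \sum_{\ell,m} \ell^2(\ell+1)\, |f_{\ell m}|^2.$$

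At this point I would invoke the classical equivalence between the intrinsic Sobolev norm on $S^2$ used in the paper (see (\ref{fractional_norm})) and the spectral norm $\sum_{\ell,m} (1+\ell)^{2s}|f_{\ell m}|^2$, which holds for all $s\in\RR$ via the functional calculus of $(I - \overline{\Delta})^{s/2}$. The inequality then reduces to the elementary fact that $\ell^2(\ell+1) \geq C(1+\ell)^3$ for every $\ell \geq 1$, while the single mode $\ell=0$ contributes nothing on the left and at most $|f_{00}|^2 \leq C\p f\p_{0,\partial}^2$ to $\p f\p_{3/2,\partial}^2$. Splitting the sum at $\ell=0$ and absorbing this one mode yields $C\p f\p_{3/2,\partial}^2 - C\p f\p_{0,\partial}^2$, as claimed.

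The only delicate point is the norm equivalence on $S^2$, which should be cited rather than reproved. An alternative would be to continue the Fermi-coordinate computation of lemma \ref{positivity_lemma}, which after passing to the limit $\varepsilon\to 0^+$ gives the identity
$$-\int_{\partial\Om}\overline{\Delta}f\,\partial_\nu f = \int_{\partial\Om}|\nabla_\partial f|^2 + \int_\Om\bigl(|\nabla\nabla f|^2 - |\nabla\nabla_V f|^2\bigr),$$
but extracting the full $H^{3/2}$ strength from the right-hand side (rather than merely $H^1$ on the boundary) would require a sharpened elliptic-regularity argument and seems strictly harder than simply reading off the eigenvalues of $-\overline{\Delta}\partial_\nu$ acting on $S^2$.
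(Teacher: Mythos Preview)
Your argument is correct and takes a genuinely different route from the paper. The paper does precisely what you describe in your closing paragraph as the ``alternative'': it continues the Fermi-coordinate computation of lemma~\ref{positivity_lemma} until (\ref{estimate_bry_interior_epsilon}), rewrites the interior integrand as $|\nabla\overline{\nabla} f|^2$ (tangential gradient of $f$ differentiated once more in the bulk), and then extracts the $H^{3/2}(\partial\Om)$ control by the chain
\[
\int_{\Om_\varepsilon}|\nabla\overline{\nabla} f|^2 \;\geq\; \|\overline{\nabla} f\|_1^2 - C\|f\|_1^2
\;\geq\; C\|\overline{\nabla} f\|_{\frac12,\partial_\varepsilon}^2 - C\|f\|_{\frac12,\partial_\varepsilon}^2,
\]
followed by interpolation with Cauchy's inequality (\ref{Cauchy_epsilon}) to absorb the $\|f\|_{\frac12,\partial_\varepsilon}^2$ term, and finally the limit $\varepsilon\to 0^+$. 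So the ``sharpened elliptic-regularity argument'' you anticipated is just the trace inequality (\ref{restriction}) applied to $\overline{\nabla} f$, together with (\ref{elliptic_estimate_Laplace_eq}) and interpolation --- not as delicate as you feared, but certainly longer than your spectral computation.

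Your spherical-harmonic approach is more direct and has the virtue of giving the exact symbol $\ell^2(\ell+1)$ of $-\overline{\Delta}\partial_\nu$ on $S^2$, from which both this lemma and lemma~\ref{positivity_lemma} are immediate. The price is that it is specific to the ball (which the paper has already assumed anyway) and relies on the equivalence of the Gagliardo norm (\ref{fractional_norm}) with the spectral norm $\sum_{\ell,m}(1+\ell)^{2s}|f_{\ell m}|^2$, which you correctly flag as the one external input. The paper's approach, by contrast, stays within the energy-method framework of section~\ref{section_basic_energy} and reuses the geometric identities already set up for lemma~\ref{positivity_lemma}; it is more portable in spirit, though every step here still leans on the radial symmetry (e.g.\ the choice $\varphi=r^2$).
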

\begin{rema}
The presence of the negative term $-\p f \p_{0,\partial}^2$ is necessary as
$\int_{\partial \Om} \overline{\Delta} f \partial_\nu f$ is zero on constant functions.
\end{rema}
\begin{proof}
The equality follows by integration by parts. In light of
(\ref{second_by_parts_positivity}), it suffices to obtain the inequality for
\begin{align}
\begin{split}
\int_{\partial \Om}  \langle \nabla_\partial f, \nabla_\nu \nabla_\partial f \rangle ,
\end{split}
\nonumber
\end{align}
which, as in lemma
\ref{positivity_lemma}, 
along $\partial \Om_\varepsilon^1$ corresponds to the term
\begin{gather}
- \int_{\partial \Om_\varepsilon^1}  \overline{\nabla}^i f \nabla_3 \overline{\nabla}_i f.
\nonumber
\end{gather}
Proceed as in lemma \ref{positivity_lemma}
 until (\ref{estimate_bry_interior_epsilon}), and consider its 
first two terms on the right hand side. Their integrand  gives
\begin{gather}
 \nabla^\mu \nabla^\si f \nabla_\mu \nabla_\si f  
- \nabla^\mu \nabla^3 f \nabla_\mu \nabla_3 f  
=  \nabla^\mu \nabla^i f \nabla_\mu \nabla_i f.  
\nonumber
\end{gather}
Letting $\overline{\nabla}$ denote the covariant derivative along
$\partial \Om_r \cap \Om_\varepsilon$ (which correspond to the level sets
$x^3 = \text{constant}$), noticing that $\overline{\nabla}_i f = \partial_i f
= \nabla_i f$ and that, therefore, $\overline{\nabla}_i$ gives a well-defined
operator on $\Om_\varepsilon$, we can write the above as
\begin{gather}
 \nabla^\mu \nabla^\si f \nabla_\mu \nabla_\si f  
- \nabla^\mu \nabla^3 f \nabla_\mu \nabla_3 f  
=  | \nabla \overline{\nabla} f|^2,  
\nonumber
\end{gather}
thus
(\ref{estimate_bry_interior_epsilon}) gives, 
\begin{align}
\begin{split}
- \int_{\partial \Om_\varepsilon^1}  \overline{\nabla}^i f \nabla_3 \overline{\nabla}_i f
& = 
\int_{\Om_\varepsilon} |\nabla \overline{\nabla} f|^2 
+ \int_{\partial \Om_\varepsilon^2} \nabla^3 f \nabla_V \nabla_3 f 
- \int_{\partial \Om_\varepsilon^2} \nabla^\si f \nabla_V \nabla_\si f.
\end{split}
\label{initial_integral_lemma_3_2}
\end{align}
But
\begin{align}
\begin{split}
\int_{\Om_\varepsilon} |\nabla \overline{\nabla} f|^2 
\geq 
\p \overline{\nabla} f \p_1^2 - C\p  f\p_1^2 
 \geq 
\p \overline{\nabla} f \p_1^2 - C\p  f\p_{\frac{1}{2}, \partial_\varepsilon}^2  
 \geq 
C \p \overline{\nabla} f \p_{\frac{1}{2},\partial_\varepsilon}^2 
- C\p  f\p_{\frac{1}{2}, \partial_\varepsilon}^2 ,
\end{split}
\label{second_der_f_lemma_3_2}
\end{align}
where in the last step we used (\ref{restriction}), in the next-to-the-last,
(\ref{elliptic_estimate_Laplace_eq}), and
$\p  \cdot \p_{s, \partial_\varepsilon}$ is the Sobolev norm on the 
boundary $\partial \Om_\varepsilon$. Applying the interpolation inequality 
(\ref{interpolation}) with $s_1 = 0$, $s_2 = \frac{1}{2}$, and $s_3 = 1$,
\begin{align}
\begin{split}
\p  f\p_{\frac{1}{2}, \partial_\varepsilon}^2 \leq & 
\frac{C}{\gamma} \p f \p_{0,\partial_\varepsilon}^2 
+ \gamma \p f \p_{1,\partial_\varepsilon}^2 ,
\end{split}
\label{interpolation_lemma_3_2}
\end{align}
where the Cauchy inequality with $\gamma$, (\ref{Cauchy_epsilon}), 
has been employed.
Using (\ref{interpolation_lemma_3_2}) in (\ref{second_der_f_lemma_3_2}) and recalling
(\ref{fractional_norm}),
\begin{align}
\begin{split}
\int_{\Om_\varepsilon} |\nabla \overline{\nabla} f|^2 
 \geq &
 C \p \overline{\nabla} f \p_{\frac{1}{2},\partial_\varepsilon}^2
 - C \gamma \p f \p_{1,\partial_\varepsilon}^2
- \frac{C}{\gamma} \p f \p_{0,\partial_\varepsilon}^2  \\
= & 
C \left[ \overline{\nabla} f \right]_{\frac{1}{2}, \partial_\varepsilon}^2 + 
 C \p f \p_{1,\partial_\varepsilon}^2
- C \gamma \p f \p_{1,\partial_\varepsilon}^2
- \frac{C}{\gamma} \p f \p_{0,\partial_\varepsilon}^2 \\
\geq & 
C \left[ \overline{\nabla} f \right]_{\frac{1}{2}, \partial_\varepsilon}^2 + 
 C \p f \p_{1,\partial_\varepsilon}^2
- \frac{C}{\gamma} \p f \p_{0,\partial_\varepsilon}^2,
\end{split}
\label{second_der_f_lemma_3_2_2}
\end{align}
where the last step follows by choosing $\gamma$ sufficiently small.
Since $\overline{\nabla}$ is differentiation along $\partial \Om_\varepsilon$, 
recalling (\ref{fractional_norm}) once more, we see that
\begin{gather}
C \left[ \overline{\nabla} f \right]_{\frac{1}{2}, \partial_\varepsilon}^2 + 
 C \p f \p_{1,\partial_\varepsilon}^2
 \geq C \p f \p_{\frac{3}{2}, \partial_\varepsilon}^2,
 \nonumber
 \end{gather}
so that (\ref{second_der_f_lemma_3_2_2}) gives
\begin{align}
\begin{split}
\int_{\Om_\varepsilon} |\nabla \overline{\nabla} f|^2 
 \geq &  C\p f \p^2_{\frac{3}{2}, \partial_\varepsilon}
 - C \p f \p^2_{0,\partial_\varepsilon},
  \end{split}
\nonumber
\end{align}
and therefore (\ref{initial_integral_lemma_3_2}) implies
\begin{align}
\begin{split}
- \int_{\partial \Om_\varepsilon^1}  \overline{\nabla}^i f \nabla_3 \overline{\nabla}_i f
& \geq 
C\p f \p^2_{\frac{3}{2}, \partial_\varepsilon}
 - C \p f \p^2_{0,\partial_\varepsilon}
+ \int_{\partial \Om_\varepsilon^2} \nabla^3 f \nabla_V \nabla_3 f 
- \int_{\partial \Om_\varepsilon^2} \nabla^\si f \nabla_V \nabla_\si f.
\end{split}
\label{last_step_lemma_3_2}
\end{align}
To finish the proof, split the first two terms on the right hand
of (\ref{last_step_lemma_3_2}) in integrals along $\partial \Om_\varepsilon^{21}$ and
$\partial \Om_\varepsilon^{22}$. Arguing as in lemma \ref{positivity_lemma}, 
all integrals on $\partial \Om_\varepsilon^2$ vanish in the limit $\varepsilon\rar 0^+$,
which gives the result.
\end{proof}

As a consequence of lemma \ref{lemma_3_2_norm} and the definition
\ref{def_E}, we have
\begin{align}
\begin{split}
\p f \p^2_{\frac{3}{2},\partial} & \leq \frac{C}{\kappa} E 
+ C \p f \p_{0,\partial}^2.
\end{split}
\label{estimate_f_3_2_Energy_1}
\end{align}
Using the fundamental theorem of calculus and the Cauchy-Schwarz inequality,
\begin{align}
\begin{split}
\p f \p_{0,\partial}^2 & \leq 
 C \p f(0) \p_{0,\partial}^2 + C \left( \int_0^t \p \dot{f} \p_{0,\partial} \right)^2
\leq  C \p f(0) \p_{0,\partial}^2 +  C t \int_0^t \p \dot{f} \p_{0, \partial}^2.
\end{split}
\label{estimate_f_3_2_Energy_2}
\end{align}
In the above, we used  Jensen's inequality,
\begin{gather}
h\left( \dashint f \right)  \leq \dashint h(f),
\nonumber
\end{gather}
where $h$ is a convex function and $\dashint$ the average over the domain of integration,
to estimate
\begin{align}
\begin{split}
\left( \int_0^t \p \dot{f} \p_{0,\partial} \right)^2 
& = 
t^2 \left( \dashint_0^t \p \dot{f} \p_{0,\partial} \right)^2 
\leq t \int_0^t \p \dot{f} \p_{0, \partial}^2.
\end{split}
\nonumber
\end{align}
Thus, (\ref{f_dot_partial_E}),
 (\ref{estimate_f_3_2_Energy_1}), and (\ref{estimate_f_3_2_Energy_2})  give
\begin{align}
\begin{split}
\p f \p^2_{\frac{3}{2},\partial} & \leq \frac{C}{\kappa} E 
+ C \p f(0) \p_{0,\partial}^2 +  C t \int_0^t E.
\end{split}
\label{estimate_f_3_2_Energy_bound}
\end{align}
Inequalities (\ref{f_dot_partial_E}) and (\ref{estimate_f_3_2_Energy_bound})
combined with (\ref{simple_energy_estimate_linear_2}) give bounds 
for $\p \dot{f} \p_{0,\partial}$ and $\p f \p^2_{\frac{3}{2},\partial}$.
More precisely, we have
\begin{gather}
\p \dot{f} \p_{0, \partial}^2  \leq C \left(  
\p f(0) \p_{\frac{3}{2}, \partial }^2 +  \p \dot{f}(0) \p_{0,\partial}^2
+ \frac{1}{2} \int_0^t  \p G(\tau) \p_{0,\partial}^2 \, d\tau 
\right) e^{\frac{1}{2} t},
\label{bound_f_dot}
\end{gather}
and
\begin{align}
\begin{split}
\p f \p^2_{\frac{3}{2},\partial} & \leq 
 C \p f(0) \p_{0,\partial}^2  + \frac{C}{\kappa}
\left(  
\p f(0) \p_{\frac{3}{2}, \partial }^2 +  \p \dot{f}(0) \p_{0,\partial}^2
+ \frac{1}{2} \int_0^t  \p G(\tau) \p_{0,\partial}^2 \, d\tau 
\right) e^{\frac{1}{2} t} \\
& + C t^2 e^{\frac{1}{2}T} \left(
\p f(0) \p_{\frac{3}{2}, \partial }^2 +  \p \dot{f}(0) \p_{0,\partial}^2
+ \int_0^t  \p G(\tau) \p_{0,\partial}^2 \, d\tau 
\right),
\end{split}
\label{bound_f_3_2_}
\end{align}
where we used $e^{\frac{1}{2} t} \leq e^{\frac{1}{2} T}$, and 
\begin{gather}
\int_0^t  \int_0^\tau \p G(\si) \p_{0,\partial}^2 \, d\si d\tau 
\leq  
\int_0^t  \int_0^t \p G(\si) \p_{0,\partial}^2 \, d\si d\tau 
= t \int_0^t  \p G(\si) \p_{0,\partial}^2 \, d\si. 
\nonumber
\end{gather}

\section{Proofs.}
We are now ready to proof proposition \ref{prop_extension} and
theorem \ref{main_theorem}.
Let $X^{3,+}_T(\partial \Om)$ be the subspace of $X^3_T(\partial \Om)$
 consisting of functions
such that $f(0) = \dot{f}(0) = 0$, and
$X^{3,-}_T(\partial \Om)$ be the subspace of $X^3_T(\partial \Om)$ consisting of functions
such that $f(T) = \dot{f}(T) = 0$.
 Recall that
$X_T^3(\partial \Om) \subset X_T^{\frac{3}{2}}(\partial \Om)$.

\begin{lemma}
$X^{3,\pm}_T(\partial \Om)$ is dense in $L^2(T)$.
\end{lemma}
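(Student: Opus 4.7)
The plan is to produce a single subspace of $L^2(T)$ which is both dense in $L^2(T)$ and contained in the intersection $X^{3,+}_T(\partial \Om) \cap X^{3,-}_T(\partial \Om)$. My natural candidate is $C_c^\infty((0,T) \times \partial \Om)$, the space of smooth functions on $[0,T] \times \partial \Om$ whose support is compactly contained in the open cylinder $(0,T) \times \partial \Om$.

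First I would verify the inclusion $C_c^\infty((0,T) \times \partial \Om) \subset X^{3,+}_T(\partial \Om) \cap X^{3,-}_T(\partial \Om)$. Given $\varphi$ in this space, its support lies in some $[\delta, T-\delta] \times \partial \Om$ with $\delta > 0$; in particular, $\varphi$ and all of its time derivatives vanish in neighborhoods of both $t = 0$ and $t = T$, so the conditions $\varphi(0) = \dot\varphi(0) = 0$ and $\varphi(T) = \dot\varphi(T) = 0$ are automatic. Smoothness of $\varphi$ together with compactness of $\partial \Om$ gives uniform $C^k$ bounds in both variables for every $k$, hence membership in $C^0([0,T], H^3(\partial \Om)) \cap C^1([0,T], H^{3/2}(\partial \Om)) \cap C^2([0,T], H^0(\partial \Om)) = X^3_T(\partial \Om)$.

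Second, I would invoke the classical density of $C_c^\infty((0,T) \times \partial \Om)$ in $L^2([0,T] \times \partial \Om)$: given $g \in L^2(T)$, one first truncates in time by replacing $g$ with $g \chi_{[\delta, T-\delta]}$ (whose $L^2$ distance to $g$ tends to zero as $\delta \rar 0^+$) and then mollifies using a smooth partition of unity subordinated to an atlas of $\partial \Om$ combined with a space-time mollifier of small enough support that the regularized function remains compactly supported in $(0,T) \times \partial \Om$. Combined with the previous step, this yields simultaneously the density of $X^{3,+}_T(\partial \Om)$ and of $X^{3,-}_T(\partial \Om)$ in $L^2(T)$.

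I do not expect any serious obstacle here: the content is just the elementary observation that the vanishing-at-endpoints conditions defining $X^{3,\pm}_T$ are automatically met by compactly supported smooth test functions in the open cylinder, and such test functions form the standard dense subset of $L^2(T)$.
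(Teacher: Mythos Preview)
Your proposal is correct and follows essentially the same approach as the paper, which simply remarks that the argument is ``very similar to the proof that compactly supported functions are dense in Sobolev spaces, using mollifiers.'' You have merely supplied the details the paper omits: exhibiting $C_c^\infty((0,T)\times\partial\Om)$ as a common dense subspace of $L^2(T)$ that sits inside both $X^{3,+}_T(\partial\Om)$ and $X^{3,-}_T(\partial\Om)$.
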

\begin{proof}
This is very similar to the proof that compactly supported functions are dense
in Sobolev spaces, using mollifiers.
\end{proof}

\noindent \emph{Proof of proposition \ref{prop_extension}-(i):} 
From
(\ref{bound_f_dot}) and (\ref{bound_f_3_2_}) with $G = 0$, $f(0) = 0$, and $\dot{f}(0)=0$,
it follows that $\cL$ is injective.

Let $(G, f_0, f_1) \in \cH$ and $f = \cL^{-1}\big( (G, f_0, f_1) \big)$. Then
(\ref{bound_f_3_2_}) gives
\begin{gather}
\p f \p_{\frac{3}{2}, \partial} \leq C(T,\kappa) 
\left(
  \p f_0 \p_{\frac{3}{2}, \partial} 
+\p f_1 \p_{0,\partial} + \sqrt{ \int_0^T \p G(\tau) \p_{0,\partial}^2 \, d\tau }
\right),
\nonumber
\end{gather}
where $C(T, \kappa)$ is a constant that depends on $T$ and $\kappa$, and 
we have used that  
\begin{gather}
\int_0^t \p G(\tau) \p_{0,\partial}^2 \ d\tau 
\leq 
\int_0^T \p G(\tau) \p_{0,\partial}^2 \ d\tau, \, t \in [0,T],
\nonumber
\end{gather}
and that the exponential is an increasing function.
But
\begin{gather}
\int_0^T \p G(\tau) \p_{0,\partial}^2 \, d\tau
= \int_0^T \int_\Om |G(\tau,x)|^2 \, dx\, d\tau \leq C \p G \p_{L^2(T)}^2,
\nonumber
\end{gather}
so that 
\begin{gather}
\p f \p_{\frac{3}{2}, \partial} \leq C(T,\kappa) 
\left(
  \p f_0 \p_{\frac{3}{2}, \partial} 
+\p f_1 \p_{0,\partial} + \p G \p_{L^2(T)}
\right).
\nonumber
\end{gather}
 Similarly, from (\ref{bound_f_dot}) we also conclude,
\begin{gather}
\p \dot{f} \p_{0, \partial} \leq C(T,\kappa) 
\left(
  \p f_0 \p_{\frac{3}{2}, \partial} 
+\p f_1 \p_{0,\partial} + \p G \p_{L^2(T)}
\right).
\nonumber
\end{gather} 
These last two inequalities imply
\begin{gather}
\p f \p_{X^\frac{3}{2}_T(\partial \Om)} \leq C(T,\kappa) 
\left(
  \p f_0 \p_{\frac{3}{2}, \partial} 
+\p f_1 \p_{0,\partial} + \p G \p_{L^2(T)}
\right).
\label{continuity_L_inverse}
\end{gather} 
As the right hand side of this last inequality is the norm of 
$(G,f_0,f_1)$ in the topology of $\cH$, we conclude that $\cL^{-1}$ is a continuous map.
\qed \\

\noindent \emph{Proof of proposition \ref{prop_extension}-(ii):} 
Denoting by $\overline{\cR}$ the closure
of $\cR$ in $\cH$, $\cL^{-1}$ extends, by continuity, to a continuous linear map
\begin{gather}
\overline{\cL^{-1}}: \overline{\cR} \rar X^\frac{3}{2}_T(\partial \Om).
\nonumber
\end{gather}
In order to show that the closure of $\cR$ in $\cH$ is the whole of $\cH$, i.e., 
\begin{gather}
\overline{\cR} = \cH,
\nonumber
\end{gather}
we will prove that if $v \perp \overline{\cR}$, then 
$ v = 0$. As $\cR$ is dense in its closure, it suffices to show that 
\begin{gather}
\text{if } (v,w)_\cH = 0 \text{ for all } w = (G,f_0, f_1) \in \cR,
\text{ then } v = 0,
\nonumber
\end{gather}
where $(\cdot, \cdot, \cdot)_\cH$ is the inner product
on $\cH \equiv L^2(T) \times H^\frac{3}{2}(\partial \Om) \times 
H^0(\partial \Om)$. Write $v = (H, v_0, v_1)$, $w = w = (G,f_0, f_1)$, and suppose that
\begin{gather}
(H,G)_{L^2(T)} + (v_0, f_0)_{\frac{3}{2}, \partial} + (v_1, f_1)_{0,\partial} = 0
\text{ for all } (G,f_0,f_1) \in \cR,
\nonumber
\end{gather}
where $(\cdot, \cdot)_{L^2(T)}$, $(\cdot, \cdot)_{\frac{3}{2},\partial}$, 
and $(\cdot, \cdot)_{0,\partial}$ are the inner products in
$ L^2(T)$, $H^\frac{3}{2}(\partial \Om)$, and $H^0(\partial \Om)$, respectively.
By definition of $\cR$ and the fact that $\cL$ is injective, the above means
\begin{gather}
(H,\ddot{f} - \kappa \overline{\Delta} \partial_\nu f)_{L^2(T)} 
+ (v_0, f(0))_{\frac{3}{2}, \partial} + (v_1, \dot{f}(0))_{0,\partial} = 0
\text{ for all } f \in X^3_T(\partial \Om).
\label{inner_product_zero_1}
\end{gather}
Assume first that $f \in X^{3,+}_T(\partial \Om)$. In this case 
(\ref{inner_product_zero_1}) becomes
\begin{gather}
(H, \ddot{f} - \kappa \overline{\Delta} \partial_\nu f)_{L^2(T)} = 0.
\label{inner_product_zero_2}
\end{gather}
Suppose, further, that $H \in X^{3,-}_T(\partial \Om)$. Then
\begin{align}
\begin{split}
(H, \ddot{f} - \kappa \overline{\Delta} \partial_\nu f)_{L^2(T)} & = 
\int_{[0,T]\times \partial \Om} H ( \ddot{f} - \kappa \overline{\Delta} \partial_\nu f )
 \\
& = 
\int_{ \partial \Om} \int_0^T H  \ddot{f} \, dt dx - 
\kappa \int_{ \partial \Om} \int_0^T H  \overline{\Delta} \partial_\nu f \, dt dx 
\end{split}
\label{integration_parts_L2_T}
\end{align}
Integrating by parts in the time variable  the first term,
\begin{align}
\begin{split}
\int_{ \partial \Om} \int_0^T H  \ddot{f} \, dt dx
 & = 
\int_{ \partial \Om}  \int_0^T \ddot{H} f  \, dt  dx,
\end{split}
\label{integration_parts_time_H_f}
\end{align}
where we used that $f \in X^{3,+}_T(\partial \Om)$ and
$H \in X^{3,-}_T(\partial \Om)$.
For the second term in (\ref{integration_parts_L2_T}),
switch the order of integration and integrate by parts the Laplacian term to get
\begin{gather}
\int_{ \partial \Om} \int_0^T H  \overline{\Delta} \partial_\nu f \, dt dx 
= 
 \int_0^T \int_{ \partial \Om}  \overline{\Delta} H \partial_\nu f \,  dx dt. 
\nonumber
\end{gather}
Next, consider the harmonic extensions of $f$ and $H$ to the whole of $\Om$, which 
we still denote by $f$, and $H$, respectively.
 Letting $\varphi(r) = r^2$,  using  Green's
identity, and arguing as in section \ref{section_basic_energy},
\begin{align}
\begin{split}
\int_\Om \left( \varphi \overline{\Delta} H \Delta f - f \Delta( \varphi \overline{\Delta} H )
\right) 
& = \int_{\partial \Om} \left( \overline{\Delta} H \partial_\nu f  - f \partial_\nu (\varphi
\overline{\Delta} H ) \right) 
= \int_{\partial \Om} \left( \overline{\Delta} H \partial_\nu f  - 
f \overline{\Delta} \partial_\nu H \right),
\end{split}
\label{integration_parts_spatial_H_f}
\end{align}
where we used (\ref{induced_Laplacian_spherical}) so that, on $\partial \Om$,
\begin{gather}
\partial_\nu (\varphi\overline{\Delta} H ) = \partial_r(r^2 \overline{\Delta} H )
= \partial_r \Delta_{S^2} H = \Delta_{S^2} \partial_r H =
\overline{\Delta} \partial_\nu H.
\nonumber
\end{gather} 
Using also (\ref{Laplacian_spherical}),
\begin{align}
\begin{split}
\Delta (\varphi \overline{\Delta} H ) & = 
\Delta_{S^2} \partial_r^2 H
+ \Delta_{S^2}\left( \frac{2}{r} \partial_r H \right)
+ \Delta_{S^2} \left (\frac{1}{r^2} \Delta_{S^2} H \right) 
= \Delta_{S^2} \Delta H = 0.
\end{split}
\nonumber
\end{align}
And as $\Delta f = 0$, (\ref{integration_parts_spatial_H_f}) gives
\begin{gather}
\int_{\partial \Om} \overline{\Delta} H \partial_\nu f  = 
\int_{\partial \Om} f \overline{\Delta} \partial_\nu H.
\label{integration_parts_spatial_H_f_result}
\end{gather}
Using (\ref{integration_parts_time_H_f}) and (\ref{integration_parts_spatial_H_f_result}) into
(\ref{integration_parts_L2_T}) yields,
\begin{gather}
(H, \ddot{f} - \overline{\Delta} \partial_\nu f )_{L^2(T)} 
= (\ddot{H} - \overline{\Delta} \partial_\nu H, f )_{L^2(T)}.
\nonumber
\end{gather}
Since this holds for all $ f \in X^{3,+}_T(\partial \Om)$ we conclude, from 
(\ref{inner_product_zero_1}) 
and the density 
of $X^{3,+}_T(\partial \Om)$ in $L^2(T)$, that 
$(\ddot{H} - \overline{\Delta} \partial_\nu H, f )_{L^2(T)} = 0$ for all $f \in L^2(T)$, 
and thus
\begin{gather}
\ddot{H} - \overline{\Delta} \partial_\nu H = 0.
\nonumber
\end{gather}
Recall that $H(T) = \dot{H}(T) = 0$ because $H \in X^{3,-}_T(\partial \Om)$.
But solutions to the above equation, with these boundary conditions, are unique;
this follows using the same argument used to show that $\cL^{-1}$
is injective, i.e., using energy estimates, except 
 with the roles of $0$ and $T$ reversed.
Thus $H = 0$. Since $X^{3,-}_T(\partial \Om)$ is dense in $L^2(T)$, we conclude from
the continuity of the inner product that if $(v,w)_\cH = 0$ for all $w \in \cR$, then
$v = (0, v_0, v_1)$. 
(\ref{inner_product_zero_1}) therefore reduces to
\begin{gather}
(v_0, f(0))_{\frac{3}{2}, \partial} + (v_1, \dot{f}(0))_{0,\partial} = 0
\text{ for all } f \in X^3_T(\partial \Om).
\nonumber
\end{gather}
But if $f$ is an arbitrary element of $X^3_T(\partial \Om)$, then $f(0)$ and
$\dot{f}(0)$ are arbitrary elements of $H^3(\partial \Om)$ and $H^\frac{3}{2}(\partial \Om)$,
respectively. Since these last two spaces are dense in, respectively,
 $H^\frac{3}{2}(\partial \Om)$ and $H^0(\partial \Om)$, we conclude that
 \begin{gather}
(v_0, f_0)_{\frac{3}{2}, \partial} + (v_1, f_1)_{0,\partial} = 0
\text{ for all } (f_0, f_1) \in H^\frac{3}{2}(\partial \Om) \times H^0(\partial \Om),
\nonumber
\end{gather}
and thus $v = 0$, as desired. \qed

\noindent \emph{Proof of proposition \ref{prop_extension}-(iii):}
We already know that $\overline{\cL^{-1}}$ is defined on the 
whole of $\cH$, and any $u \in \cH$ is the limit of a sequence in $\cR$. 
Let $ y = \overline{\cL^{-1}}(u)$, and take a sequence $\{ u_\ell \} \subset \cR$ 
converging in $\cH$ to $u$. Then 
\begin{gather}
\overline{\cL^{-1}}(u_\ell ) = \cL^{-1}(u_\ell) \equiv y_\ell.
\nonumber
\end{gather}
By construction, $\cL^{-1}$ is the inverse of $\cL$ defined on $X^3_T(\partial \Om)$, thus
$y_\ell \in X^3_T(\partial \Om)$. As we showed that $\cL^{-1}$ is a continuous map
(see (\ref{continuity_L_inverse})), we have
\begin{align}
\begin{split}
\p y - y_\ell \p_{X^\frac{3}{2}_T(\partial \Om)} 
= 
\p \overline{\cL^{-1}}(u) - \cL^{-1}(u_\ell) \p_{X^\frac{3}{2}_T(\partial \Om)} 
=
\p \overline{\cL^{-1}}(u - u_\ell) \p_{X^\frac{3}{2}_T(\partial \Om)} 
\leq C \p u - u_\ell \p_\cH,
\end{split}
\nonumber
\end{align}
implying that $y_\ell \rar y$ in $X^\frac{3}{2}_T(\partial \Om)$, and thus
$y \in \overline{X^3_T(\partial \Om)\,}{}^{X^\frac{3}{2}_T(\partial \Om)}$.

Injectivity also follows from the continuity of $\overline{\cL^{-1}}$.
Say $\overline{\cL^{-1}}(u) = 0$, and let $\{ u_\ell \}$ be as above.
Then
\begin{gather}
0 = \overline{\cL^{-1}}(u) = \overline{\cL^{-1}}(\lim u_\ell)
= \lim \overline{\cL^{-1}}(u_\ell),
\nonumber
\end{gather}
which implies $u_\ell = 0$ for every $\ell$ since 
$\overline{\cL^{-1}} = \cL^{-1}$ on $\cR$ and $\cL^{-1}$ is injective.
Thus $u = 0$. \qed \\

\noindent \emph{Proof of theorem  \ref{main_theorem}:} 
The existence and uniqueness of a weak solution follows at once from 
proposition \ref{prop_extension} , upon noticing that if
$f \in X_T^\frac{3}{2}(\partial \Om)$, then, by elliptic theory,
 its harmonic extension
is in $X_T^2( \Om)$. \qed

\end{document}